\newtheorem{theorem}{{\sc Theorem}}[section]
\newtheorem{lemma}{{\sc Lemma}}[section]
\newtheorem{remark}{{\sc Remark}}[section]
\newtheorem{definition}{{\sc Definition}}[section]
\newtheorem{case}{\sc{Case}}
\begin{document}

\author{Tigran Hakobyan}
\title{\textbf {On an asymptotic behavior of the divisor function $\tau(n)$}}
 \date{}
\maketitle

\begin{abstract}
For $\mu>0$ we study an asymptotic behavior of the sequence defined as $$T_{n}(\mu)=\frac{max_{1\leq m \leq {n^{\frac{1}{\mu}}}}\{\tau (n + m)\}}{\tau(n)},\ n=1,2,...$$ where $\tau(n)$ denotes the number of natural divisors of the given $n\in \mathbb{N}$. The motivation  of this observation is to explore  whether $\tau$  function oscillates rapidly in  small  neighborhoods of  natural  numbers.
\end{abstract}

\section*{Introduction}
Recall  that  the  function $\tau(n)$  defined as  the  number of  positive divisors of  the  given  positive  integer $n$  has  many  investigated asymptotic  properties  and   some  of  them  are  presented below.
\begin{enumerate}
\item $\forall \epsilon > 0$  $\tau(n)=o(n^{\epsilon}).[1]$\\
\item $\forall \epsilon > 0 \ \exists $ infinitely  many $n\in \mathbb{N} $ such that $$\tau(n)> 2^{(1-\epsilon)\frac{\ln(n)}{\ln(\ln(n))}}$$ and $$\tau(n)< 2^{(1+\epsilon)\frac{\ln(n)}{\ln(\ln(n))}}$$
  holds  for  sufficiently  large  $n$. ( Vigert, 1907)

\item $$\sum_{k=1}^n \tau(k)=\sum_{k=1}^n [\frac{n}{k}]=n\ln(n)+(2\gamma-1)n+O(n^{\frac{13}{40}+\epsilon}), \forall \epsilon > 0$$ where $\gamma$ is  the Euler's constant.[1]\\
\item  Worth mentioning  the result in [5]   concerning Karatsuba's problem on determining the asymptotic behavior of the sum $$S_{a}(x)=\sum_{n\le x}\frac{\tau(n)}{\tau(n+a)} $$ stated in 2004 which was estimated by M.A. Korolev  in 2010.
\end{enumerate}

\section{Basic assertions}

For $\mu>0$ consider the sequence $$T_{n}(\mu)=\frac{max_{1\leq m \leq {n^{\frac{1}{\mu}}}}\{\tau (n + m)\}}{\tau(n)},\ n=1,2,...$$
Let us assume that $(n_{k})$ is a sequence of positive integers such that $n_{k}={p_k}^{j_k}$ where $p_k$ is prime and $ j_{k}\in \mathbb{N}$ for all $k\in \mathbb{N}$.
\begin{definition}  {\small$$ \theta=\inf\{\lambda>0|\sum_{k=1}^N\tau(k)=N\ln(N)+(2\gamma-1)N+O(N^{\lambda+\epsilon}),  \forall\epsilon>0\}.$$}where $\gamma $ is the Euler's constant.\end{definition}
The main results of this paper are the following theorems.
\begin{theorem}
 If $\mu>0$, then $T_{n_k}(\mu)\to\infty$, as $j_k\to\infty$.
\end{theorem}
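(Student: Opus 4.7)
The plan is to exhibit, for each large $n_k$, a shift $m\le n_k^{1/\mu}$ such that $n_k+m$ is forced to have many distinct small prime factors. The asymmetry is decisive: $\tau(n_k)=j_k+1$ grows only logarithmically in $n_k$ (since $n_k\ge 2^{j_k}$), while the constructed shift will give $\tau(n_k+m)\ge 2^{c\,\ln n_k/\ln\ln n_k}$ with $c=c(\mu)>0$. The bound $\tau(N)\ge 2^{\omega(N)}$ is the only divisor-function input needed; the rest is a CRT / pigeonhole argument.

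Concretely, for a parameter $y>0$ to be chosen, set $P_y=\prod_{p\le y}p$. The congruence $n_k+m\equiv 0\pmod{P_y}$ has a unique solution $m\in\{1,\dots,P_y\}$. To place this $m$ in the allowed window I need $P_y\le n_k^{1/\mu}$. Invoking Chebyshev's estimate $\sum_{p\le y}\ln p=(1+o(1))y$, the choice $y=(1-\epsilon)\dfrac{\ln n_k}{\mu}$ gives $\ln P_y\le \dfrac{\ln n_k}{\mu}$ for any fixed $\epsilon\in(0,1)$ and all sufficiently large $n_k$. Since $n_k+m$ is then divisible by each of the $\pi(y)$ primes $\le y$, the bound $\tau(N)\ge 2^{\omega(N)}$ combined with the prime number theorem $\pi(y)\sim y/\ln y$ yields
\[
\tau(n_k+m)\ \ge\ 2^{\pi(y)}\ \ge\ 2^{(1-2\epsilon)\,\ln n_k/(\mu\,\ln\ln n_k)}
\]
once $n_k$ is large enough.

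For the denominator, $n_k=p_k^{j_k}$ with $p_k\ge 2$ gives $j_k\le \log_2 n_k$, hence $\tau(n_k)=j_k+1=O(\ln n_k)$. Combining,
\[
T_{n_k}(\mu)\ \ge\ \frac{\tau(n_k+m)}{\tau(n_k)}\ \ge\ \frac{2^{(1-2\epsilon)\,\ln n_k/(\mu\,\ln\ln n_k)}}{j_k+1}.
\]
Taking logarithms, the right-hand side is dominated by $\ln n_k/\ln\ln n_k$, which dwarfs $\ln(j_k+1)=O(\ln\ln n_k)$. Since $j_k\to\infty$ forces $n_k\to\infty$, the ratio tends to infinity.

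The only quantitatively delicate step is dimensioning $y$ so that the primorial $P_y$ barely fits inside the window of length $n_k^{1/\mu}$; this is exactly what Chebyshev's upper bound on $\sum_{p\le y}\ln p$ furnishes, and any weaker form of it would suffice. Everything else is routine: one constructs $m$ by a single residue condition and then compares a super-polynomial numerator $2^{c\,\ln n_k/\ln\ln n_k}$ with the merely logarithmic denominator $j_k+1$. The genuine \emph{idea} of the proof is the recognition that the window $[1,n_k^{1/\mu}]$ is long enough to contain a multiple of $P_y$ for a rather large $y$, which is insensitive to the fact that $n_k$ itself is a prime power.
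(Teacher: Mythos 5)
Your proof is correct, but it takes a genuinely different route from the paper's. You choose the shift by a single congruence $m\equiv -n_k \pmod{P_y}$ with $P_y=\prod_{p\le y}p$ and $y\asymp \ln(n_k)/\mu$, so that $n_k+m$ acquires $\pi(y)\gg \ln n_k/(\mu\ln\ln n_k)$ distinct prime factors while $m\le P_y\le n_k^{1/\mu}$, and you then beat the merely logarithmic denominator $\tau(n_k)=j_k+1\le \log_2 n_k+1$. (One naming quibble: $\sum_{p\le y}\ln p=(1+o(1))y$ is the prime number theorem rather than Chebyshev, but, as you note, Chebyshev's elementary bounds $\theta(y)\le y\ln 4$ and $\pi(y)\gg y/\ln y$ are all that is needed.) The paper instead restricts to shifts of the special form $m=p_k^{s}$ with $s\le j_k/\mu$, uses the identity $\tau(p^k+p^s)=(s+1)\tau(p^{k-s}+1)$ together with the divisibility $p^{b}+1\mid p^{a}+1$ for $b\mid a$ with $a/b$ odd to get $\tau(p^{k-s}+1)\ge\Delta(k-s)$, and then runs an averaging argument over $s$ (Lemmas 2--5, with a delicate detour to control $\nu_2(k-s)$) to extract one good $s_0$; that yields only the slowly growing bound $T_{n_k}(\mu)\gg\ln\ln(j_k/\mu)$, though one depending on $j_k$ alone. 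Your argument buys a much stronger quantitative conclusion, $T_{n_k}(\mu)\ge 2^{c\ln n_k/\ln\ln n_k}/(j_k+1)$, and since it uses only $\tau(n_k)=O(\ln n_k)$ it in fact shows $T_{n_k}(\mu)\to\infty$ as $n_k\to\infty$ for every $\mu>0$, subsuming Theorem 2 without the hypothesis $\mu<\theta^{-1}$; what the paper's longer route buys is structural information about divisors of the specific numbers $p^{a}+1$ rather than of a generic integer in the window.
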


\begin{theorem}
 If $1\leq\mu<\theta^{-1}$, then $T_{n_k}(\mu)\to\infty$, as $n_k\to\infty$.
\end{theorem}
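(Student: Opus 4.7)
The plan is to compare the maximum of $\tau(n_k+m)$ over the window $1\le m\le n_k^{1/\mu}$ with the \emph{average} of $\tau$ on that interval, and then to split into two cases according to whether the exponent $j_k$ stays bounded or tends to infinity. The first case will be handled by the Dirichlet-type asymptotic underlying Definition~1.1, while the second is handled by Theorem~1.1.

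Set $L_k:=\lfloor n_k^{1/\mu}\rfloor$ and write
\[
\sum_{m=1}^{L_k}\tau(n_k+m)=\sum_{s\le n_k+L_k}\tau(s)-\sum_{s\le n_k}\tau(s).
\]
Since $\mu<\theta^{-1}$, I can fix $\lambda$ with $\theta<\lambda<1/\mu$ and then $\epsilon>0$ so small that $\lambda+\epsilon<1/\mu$. Applying the estimate from Definition~1.1 to each of the two sums, the remainders combine into $O(n_k^{\lambda+\epsilon})$, while the leading blocks produce
\[
(n_k+L_k)\ln(n_k+L_k)-n_k\ln n_k+(2\gamma-1)L_k.
\]
The condition $\mu\ge 1$ gives $L_k\le n_k$, so a direct Taylor expansion of $x\ln x$ shows this equals $L_k\ln n_k+O(L_k)$. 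Dividing through by $L_k$, and using $\lambda+\epsilon<1/\mu$ to render $n_k^{\lambda+\epsilon}/L_k\to 0$, I obtain
\[
\frac{1}{L_k}\sum_{m=1}^{L_k}\tau(n_k+m)=(1+o(1))\ln n_k,
\]
and hence $\max_{1\le m\le L_k}\tau(n_k+m)\ge(1+o(1))\ln n_k$.

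With this lower bound in hand, I would conclude by a case split on $j_k$. If $(j_k)$ has a bounded subsequence, then along that subsequence $\tau(n_k)=j_k+1$ stays bounded while $\ln n_k\to\infty$, so $T_{n_k}(\mu)\to\infty$. If instead $j_k\to\infty$ along some subsequence, Theorem~1.1 yields $T_{n_k}(\mu)\to\infty$ on that subsequence. Since every subsequence of $(n_k)$ contains a further subsequence of one of these two types, a standard subsequence argument gives $T_{n_k}(\mu)\to\infty$ as $n_k\to\infty$.

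The main obstacle is the averaging step: verifying that the Dirichlet-type error term is genuinely swallowed by the main term on a window of length $n_k^{1/\mu}$ is exactly what the hypothesis $\mu<\theta^{-1}$ is designed to permit, and the choice of the parameters $\lambda,\epsilon$ must be made carefully so that both $\lambda>\theta$ (so Definition~1.1 applies) and $\lambda+\epsilon<1/\mu$ (so the error is negligible). Once this is secured, the expansion of $x\ln x$ is routine, and the proof only needs Theorem~1.1 as a black box to cover the complementary regime in which $\tau(n_k)$ itself becomes large.
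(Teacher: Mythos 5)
Your proposal is correct and follows essentially the same route as the paper: the averaging step over the window $1\le m\le n_k^{1/\mu}$ using the Dirichlet-type asymptotic is exactly the content of the paper's Lemma 1.6 (which yields $T_N(\mu)>c\ln(N)/\tau(N)$), and the dichotomy between bounded and unbounded $j_k$, with Theorem 1.1 covering the latter, mirrors the paper's splitting into the regimes $j_k>A$ and $p_k>B$. The only cosmetic difference is that you phrase the conclusion as a subsequence argument while the paper argues directly that only finitely many indices escape both regimes.
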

\section{Preliminary statements}
Obviously we may assume  that $ \mu \in \mathbb{N}$.\\Indeed, if theorem 1  holds  for  some $\mu_0>0$ then it holds for any $0<\mu<\mu_0$. On the other  hand theorem 2 follows  from theorem 1  as  we will  see  later.\\\\
Now we fix $ \mu \in \mathbb{N}$ , $ \mu\geq 2$  and suppose that $k=\mu m $ where $ m\in \mathbb{N}.$ 
\begin{definition} $$\nu_{p}(n)=\max\{k\geq0 :p^k|n\};$$  $$\Delta(n)=\sum_{\{p:p|n\}}\nu_{p}(n), n>1,$$

 and $$\Delta(1)=0.$$\end{definition}

Observe that using multiplicativity property of $\tau$ we will get  $$\tau(p^k+p^s)=\tau(p^s)\tau(p^{k-s}+1)=(s+1)\tau(p^{k-s}+1)$$ for any $s\in\{0,1,\dots,k-1\}$ and    prime number $p$. \\\\ On the other hand  $$\Delta(mn)=\Delta(m)+\Delta(n)$$ and  consequently $\Delta(n^k)=k\Delta(n)$ for every $m,n,k\in \mathbb{N}$. \newline 
\begin{lemma} If $(k-s)$ is  odd  then
$$\tau(p^{k-s}+1)\geq\tau(k-s)\geq\Delta(k-s).$$ 
\end{lemma}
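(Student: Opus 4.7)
The plan is to prove the two inequalities separately. Set $r := k - s$; by hypothesis $r$ is odd.

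For the first inequality $\tau(p^{r}+1) \geq \tau(r)$, I would use the elementary factorization $y^{m}+1 = (y+1)(y^{m-1} - y^{m-2} + \cdots - y + 1)$, which holds for every odd $m$. Given any divisor $d$ of $r$, the quotient $r/d$ is odd (since every divisor of an odd number is odd), so applying this identity with $y = p^{d}$ and $m = r/d$ yields $(p^{d}+1)\mid (p^{r}+1)$. Because $p\geq 2$, the map $d \mapsto p^{d}+1$ is strictly increasing, hence injective, so it produces $\tau(r)$ pairwise distinct divisors of $p^{r}+1$. This gives $\tau(p^{r}+1) \geq \tau(r)$.

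For the second inequality $\tau(r)\geq \Delta(r)$, I would write $r = q_{1}^{a_{1}}\cdots q_{l}^{a_{l}}$ in its prime factorization, so that $\tau(r) = \prod_{i=1}^{l}(a_{i}+1)$ and $\Delta(r) = \sum_{i=1}^{l} a_{i}$. Expanding the product $\prod_{i=1}^{l}(1+a_{i})$ produces $1 + \sum a_{i}$ together with nonnegative cross terms, hence $\tau(r) \geq 1 + \Delta(r) > \Delta(r)$. The edge case $r = 1$ is handled directly by the convention $\Delta(1) = 0 < 1 = \tau(1)$.

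The only nontrivial ingredient is the divisibility fact $p^{d}+1 \mid p^{r}+1$ for $d\mid r$ with $r$ odd, and this is precisely where the oddness hypothesis of the lemma is used: without it, the factorization of $y^{m}+1$ breaks down for even $m$, and indeed $p^{d}+1$ need not then divide $p^{r}+1$. Once this divisibility is in hand, both inequalities follow from routine counting and an elementary inequality between a product and a sum of nonnegative integers.
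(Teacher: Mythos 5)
Your proof is correct and follows essentially the same route as the paper: the divisibility $(p^{d}+1)\mid(p^{r}+1)$ for $d\mid r$ with $r$ odd, followed by the product-versus-sum comparison $\prod(a_i+1)>\sum a_i$. You supply slightly more detail (the explicit factorization of $y^m+1$ and the injectivity of $d\mapsto p^d+1$) than the paper, which states the divisibility fact without proof, but the argument is the same.
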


\begin{proof}
Indeed,  if $a$  is  odd  and $a\vdots b$  then $$ (m^a+1)\vdots(m^b+1)$$ for  any $m\in \mathbb{N}$ and hence $$\tau(m^a+1)\geq \tau(a).$$ The latter inequality follows from $$\tau(n)=\prod_{\{p:p|n\}}(1+\nu_p(n))>\sum_{\{p:p|n\}}\nu_p(n)=\Delta(n)$$
\end{proof}.
\begin{definition}

Now  define $$A(k)=\sum_{s=1}^m(s+1)\Delta(k-s)$$ and
  $$A'(k)=\prod_{s=1}^m(k-s)^{s+1}.$$
	
	\end{definition}
	
	So $$A'(k)=\prod_{s=1}^m(\mu m-s)^{s+1}=\frac{(\mu m-1)!}{((\mu-1)m-1)!}\prod_{s=1}^m\frac{(\mu m-s)!}{((\mu-1)m-1)!}.$$\newline Notice that $$A(k)=\Delta(A'(k))$$ So $$A(k)=\Delta(\frac{(\mu m-1)!}{((\mu-1)m-1)!}\prod_{s=1}^m\frac{(\mu m-s)!}{((\mu-1)m-1)!})=$$$$=\Delta(\frac{(\mu m-1)!}{((\mu-1)m-1)!})+\Delta(\prod_{s=1}^m\frac{(\mu m-s)!}{((\mu-1)m-1)!})$$\newline
\begin{definition}Define  $$B(k)=\Delta(\frac{(\mu m-1)!}{((\mu-1)m-1)!})$$ and $$C(k)=\Delta(\prod_{s=1}^m\frac{(\mu m-s)!}{((\mu-1)m-1)!})$$\end{definition}  So  $$A(k)=B(k)+C(k)$$.

\begin{lemma}
There is  some constant $\gamma>0$ such that $$\Delta(k!)\leq\gamma k\ln(\ln(k))$$ holds for  any  $k\in \mathbb{N}.$
\end{lemma}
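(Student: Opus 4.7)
The plan is to interpret $\Delta(k!)$ through Legendre's formula, then reduce the bound to Mertens' theorem on the sum of reciprocals of primes.

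First, since any prime dividing $k!$ is at most $k$, write
\[
\Delta(k!) \;=\; \sum_{p \le k} \nu_p(k!).
\]
Next, invoke Legendre's formula $\nu_p(k!) = \sum_{i \ge 1} \lfloor k/p^i \rfloor$ and drop the floor to obtain the clean geometric bound
\[
\nu_p(k!) \;\le\; \sum_{i=1}^{\infty} \frac{k}{p^i} \;=\; \frac{k}{p-1}.
\]
Summing over primes gives $\Delta(k!) \le k \sum_{p \le k} \tfrac{1}{p-1}$, which reduces the problem to estimating the prime reciprocal sum.

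The main analytic input is Mertens' second theorem: $\sum_{p \le k} \tfrac{1}{p} = \ln\ln k + M + o(1)$ for a constant $M$. Since $\tfrac{1}{p-1} = \tfrac{1}{p} + \tfrac{1}{p(p-1)}$ and the series $\sum_p \tfrac{1}{p(p-1)}$ converges, we obtain $\sum_{p \le k} \tfrac{1}{p-1} = \ln\ln k + O(1)$. Therefore, for all sufficiently large $k$,
\[
\Delta(k!) \;\le\; k\bigl(\ln\ln k + O(1)\bigr) \;\le\; \gamma\, k\, \ln\ln k
\]
for some absolute constant $\gamma>0$.

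The only step requiring care is the handling of small values of $k$: when $k \le e^e$ the quantity $\ln\ln k$ is non-positive or undefined, so the inequality must be ensured by choosing $\gamma$ large enough to absorb the finitely many initial cases (or by stating the lemma only for $k$ beyond a threshold, which is what its later application requires). Apart from that bookkeeping, the argument is a direct combination of Legendre's formula with Mertens' estimate, and no genuine obstacle is expected.
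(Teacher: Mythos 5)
Your proposal is correct and follows essentially the same route as the paper's own proof: Legendre's formula, the geometric-series bound $\nu_p(k!) < k/(p-1)$, the splitting $\tfrac{1}{p-1}=\tfrac{1}{p}+\tfrac{1}{p(p-1)}$, and Mertens' estimate. Your remark about small $k$ (where $\ln\ln k$ is non-positive or undefined) is a point the paper silently glosses over, so that bookkeeping observation is a welcome addition rather than a deviation.
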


\begin{proof} By the  famous identity $\nu_{p}(n!)=\sum_{s=1}^\infty[\frac{n}{p^s}]$ (henceforth $[x]$ stands for integer part of $x\in R$)  we  obtain  that
$$\Delta(k!)=\sum_{p\leq k}\nu_{p}(k!)=\sum_{p\leq k}\sum_{s=1}^\infty[\frac{k}{p^s}]<\sum_{p\leq k}\frac{k}{p-1}=$$$$=k(\sum_{p\leq k}\frac{1}{p}+\sum_{p\leq k}\frac{1}{p(p-1)})<k(\sum_{p\leq k}\frac{1}{p}+\epsilon)<$$$$<k(\ln(\ln(k))+\delta)<\gamma k\ln(ln(k))$$ where $\epsilon=\sum_{p}\frac{1}{p(p-1)}>0$  and $\delta ,\gamma>0 $ .The lemma is proved.
\end{proof}

\begin{lemma}
There  exists a constant $c>0$ such that $$A(k)\geq cm^2\ln(ln(m))$$ for all $\ m\geq2 , m\in \mathbb{N}$.(  recall that $k=\mu m$)
\end{lemma}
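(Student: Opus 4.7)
The plan is to extract the desired lower bound directly from the formula $A(k)=\sum_{s=1}^{m}(s+1)\Delta(\mu m - s)$, without passing through the refined split $A(k)=B(k)+C(k)$. The weights $s+1$ grow linearly in $s$, so discarding the indices $s<\lceil m/2\rceil$ costs only a bounded factor. For $s\geq \lceil m/2\rceil$ we have $s+1\geq m/2$, hence
$$A(k)\;\geq\;\frac{m}{2}\sum_{s=\lceil m/2\rceil}^{m}\Delta(\mu m - s)\;=\;\frac{m}{2}\sum_{j\in J}\Delta(j),$$
where $J$ is the block of consecutive integers from $(\mu-1)m$ up to $\mu m-\lceil m/2\rceil$, of length $L$ with $m/2-1\leq L\leq m/2+1$. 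Thus the lemma reduces to the single estimate
$$\sum_{j\in J}\Delta(j)\;\geq\;c_{1}\,m\ln\ln m \qquad (m\geq m_{0}),$$
for some absolute $c_{1}>0$; small values of $m$ (where $\ln\ln m\leq 0$) make the claim trivial and are absorbed into $c$.

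To establish this interval bound I would run, in the reverse direction, the same double counting the author uses in Lemma 3.2. Since $\Delta(j)=\sum_{p\mid j}\nu_{p}(j)\geq \#\{p\text{ prime}:p\mid j\}$, interchanging the order of summation gives
$$\sum_{j\in J}\Delta(j)\;\geq\;\sum_{p\text{ prime}}\#\{j\in J:p\mid j\}\;\geq\;\sum_{p\leq L/2}\Bigl\lfloor\frac{L}{p}\Bigr\rfloor\;\geq\;\frac{L}{2}\sum_{p\leq L/2}\frac{1}{p}.$$
The classical estimate $\sum_{p\leq x}1/p=\ln\ln x+O(1)$ (already invoked in the proof of Lemma 3.2) then yields $\sum_{j\in J}\Delta(j)\gtrsim L\ln\ln L\gtrsim m\ln\ln m$, which combined with the preceding inequality produces $A(k)\geq c\,m^{2}\ln\ln m$.

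The main technical point is precisely this \emph{lower} companion to Lemma 3.2: the upper bound $\Delta(k!)\leq \gamma k\ln\ln k$ of the paper uses $\nu_{p}(k!)\leq k/(p-1)$, whereas here I need a matching \emph{lower} estimate valid not for $k!$ but for $\Delta$ summed over a short interval of length $\asymp m$. The interval form above works because every prime $p\leq L/2$ admits at least $\lfloor L/p\rfloor\geq L/(2p)$ multiples in $J$, which is the only inequality one must be careful to justify (it requires $L/p\geq 2$, hence the truncation at $L/2$). No further analytic input beyond the growth rate of $\sum_{p\leq x}1/p$ is needed, so once the double counting is correctly set up the rest is bookkeeping and a choice of absolute constant.
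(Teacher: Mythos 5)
Your proof is correct, but it takes a genuinely different and more elementary route than the paper. The paper rewrites $A(k)=\Delta(A'(k))$ with $A'(k)$ a product of factorial ratios, splits $A(k)=B(k)+C(k)$, discards $B(k)$ via the bound $\Delta(k!)=O(k\ln\ln k)$, and then expands $C(k)$ through the identity $\nu_p(n!)=\sum_s[n/p^s]$ into a main term $X(m)=\frac{m(m-1)}{2}\sum_{p\le\mu m-1}\frac{1}{p-1}$ plus error terms $Y(m)$, $Z(m)$ controlled by the prime number theorem and Stirling; Mertens' estimate then gives $X(m)\gtrsim\frac{1}{2}m^2\ln\ln m$. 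You instead keep only the indices $s\ge\lceil m/2\rceil$ (legitimate, since every summand is nonnegative), pull out the weight $s+1\ge m/2$, and reduce to a lower bound for $\sum_{j\in J}\Delta(j)$ over an interval $J$ of length $L\asymp m/2$, which you get from $\Delta\ge\omega$, the double count $\sum_{j\in J}\omega(j)=\sum_p\#\{j\in J:p\mid j\}\ge\sum_{p\le L/2}\lfloor L/p\rfloor\ge\frac{L}{2}\sum_{p\le L/2}\frac{1}{p}$, and Mertens again. All the steps check out: the count of multiples of $p$ in an interval of $L$ consecutive integers is indeed at least $\lfloor L/p\rfloor$, and your truncation at $p\le L/2$ correctly guarantees $\lfloor L/p\rfloor\ge L/(2p)$. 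Your argument is shorter, avoids factorials, the $B/C$ split, and the $Y,Z$ error analysis entirely, and rests on exactly one analytic input (the growth of $\sum_{p\le x}1/p$), at the price of a worse numerical constant (roughly $1/8$ rather than the paper's $1/2$) --- which is irrelevant for the lemma as stated. One small point you share with the paper: the bound is really established only for $m$ sufficiently large, and the finitely many remaining $m\ge 2$ are absorbed by shrinking $c$ (using $A(k)>0$ there, or the sign of $\ln\ln m$ for $m=2$); you say this explicitly, which the paper does not.
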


\begin{proof} Notice that $A(k)=B(k)+C(k)$ and $B(k)=O(m\ln(\ln(m)))=o(m^2\ln(\ln(m)))$ by lemma 1. Now we estimate  $C(k)$.

$$C(k)=\sum_{l=(\mu-1)m}^{\mu m-1}\sum_{p\leq\mu m-1}(\sum_{s=1}^\infty[\frac{l}{p^s}]-\sum_{s=1}^\infty[\frac{(\mu-1)m-1}{p^s}]))\geq$$  $$\geq\sum_{l=(\mu-1)m}^{\mu m-1}\sum_{p\leq\mu m-1}(\sum_{p\leq\mu m-1}(\sum_{s=1}^\infty[\frac{l-(\mu-1)m+1}{p^s}])).$$

Furthermore $$\nu_{p}(n!)=\sum_{1\leq s\leq[\log_{p}(n)]}[\frac{n}{p^s}]>\frac{n}{p}(1+\frac{1}{p}+\ldots+\frac{1}{p^{[\log_{p}(n)]-1}})-[\log_{p}(n)]\geq$$$$\geq\frac{n-p}{p-1}-\log_{p}(n)=\frac{n-1}{p-1}-\log_{p}(np).$$
Thus $$C(k)\geq\sum_{l=(\mu-1)m}^{\mu m-1}(\sum_{p\leq\mu m-1}(\frac{l-(\mu-1)m}{p-1}-\log_{p}((l-(\mu-1)m+1)p)))=$$$$=
\sum_{p\leq\mu m-1}(\frac{1}{p-1}+\frac{2}{p-1}+\ldots+\frac{m-1}{p-1}-\log_{p}(2p\cdot3p\cdot\ldots\cdot mp))=$$$$=\frac{m(m-1)}{2}\sum_{p\leq\mu m-1}\frac{1}{p-1}-(m-1)\pi(\mu m-1)-\sum_{p\leq\mu m-1}\log_{p}(m!)$$
\newline 
\begin{definition} $$X(m)=\frac{m(m-1)}{2}\sum_{p\leq\mu m-1}\frac{1}{p-1},$$ $$Y(m)=(m-1)\pi(\mu m-1)$$ and $$Z(m)=\sum_{p\leq\mu m-1}\log_{p}(m!)$$\end{definition}.
Recall  that the functions $\pi(n),\frac{n}{\ln(n)} $and$\  Li(n)=\int_{2}^{n}\frac{dt}{\ln(t)}$ are equivalent as $n\rightarrow\infty$, where  \\
$\pi(n)=card\{1\leq k\leq n |k \ is\  prime\}$ for every $n\in \mathbb{N}.$(see$[1]$) \\ \ \\
From $$\pi(\mu m-1)=O(\frac{m}{\ln(m)})$$ we infer that$$Y(m)=(m-1)\pi(\mu m-1)=O(\frac{m^2}{\ln(m)})=o(m^2\ln(\ln(m))).$$
On the other hand  $Z(m)=\ln(m!)\sum_{p\leq\mu m-1}\frac{1}{\ln(p)}.$ 
Observe  that $$\sum_{p\leq\mu m-1}\frac{1}{\ln(p)}\leq L\sum_{s=2}^{\pi(\mu m-1)}\frac{1}{\ln(s\ln(s))}<L_{1}\sum_{s=2}^{\pi(\mu m-1)}\frac{1}{\ln(s)} $$ (since there  is an $\alpha>0$ such that$\ p_k>\alpha k\ln(k)\ $for every $k\in \mathbb{N}$ where $p_k $ is the $k$-th prime) and  that $$\sum_{s=2}^{\pi(\mu m-1)}\frac{1}{\ln(s)}\sim\int_{2}^{\pi(\mu m-1)}\frac{dt}{\ln(t)}\sim\pi(\pi(\mu m-1))\sim$$$$\sim\frac{\frac{\mu m-1}{\ln(\mu m-1)}}{\ln(\frac{\mu m-1}{\ln(\mu m-1)})}\sim\mu\frac{m}{{\ln(m)}^2}\ $$\newline (We say $f(x)\sim g(x)$ as $x\rightarrow \infty$ if there are positive constants $\alpha$ and $\beta$ such that $\alpha |f(x)|<|g(x)|<\beta |f(x)|$ for all sufficiently large $x\in \mathbb{R}$)  
\newline \newline     
Therefore using Stirling's formula in the form $$\ln(m!)=O(m\ln(m))$$ we will get  that $$Z(m)=O(\frac{m}{{\ln(m)}^2})\cdot O(m\ln(m))=O(\frac{m^2}{\ln(m)})=o(m^2\ln(\ln(m))).$$\\ \ \\
To  estimate $X(m)$ we  use  the  fact  that $$\sum_{\{p\leq n |p\ is\ prime\}}\frac{1}{p}>\ln(\ln(n))-1$$ for all $n\geq 2, n\in \mathbb{N}$, which exactly means that for all $c\in(0,\frac{1}{2}), \  X(m)$ and consequently $C(k)$ has  the property $C(k)\geq cm^2\ln(\ln(m))$ eventually. Hence $$A(k)=A(\mu m)\geq cm^2\ln(\ln(m))$$ eventually, as desired.\end{proof}

Thus $$A(k)=\sum_{s=1}^m(s+1)\Delta(k-s)\geq cm^2\ln(\ln(m))$$ for  $m$  large enough.\newline \newline It follows that $\exists s_{0}\in\{1,2,\ldots,m\}$ such that $$(s_{0}+1)\Delta(k-s_{0})\geq c\ mln(\ln(m)).$$ If$\ (k-s_{0})$ is odd then $$\tau(p^k+p^{s_{0}})\geq (s_{0}+1)\Delta(k-s_{0})\geq c\ mln(\ln(m))$$ (see  the  section "preliminary statements")\newline   hence $$\frac{\max_{1 \leq m \leq \sqrt[\mu]{n}}\tau(n+m)}{\tau(n)}\geq\frac{cm\ln(\ln(m))}{\mu m+1}>\frac{c}{2\mu}\ln(\ln(m))$$ for $m$ large  enough. 
\begin{remark}
Unfortunately ,it  may happen that $(k-s_0)$ is  even . To fix this  we  proceed  in the following  way .
\end{remark}
\begin{definition}

For an arbitrary $m\in \mathbb{N}$ and $\beta>0$ let us define $$I(m,\beta)=\sum_{\{1\leq s\leq m|\nu_{2}(k-s)>\beta ln(ln(m))\}}(s+1)\Delta(k-s).$$
\end{definition}

\begin{lemma}For every $\beta>0$, $$I(m,\beta)=o(m^2\ln(\ln(m)))$$ as $m\rightarrow\infty$.\end{lemma}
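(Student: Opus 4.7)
The condition $\nu_2(k-s)>\beta\ln\ln m$ is equivalent to $2^J\mid k-s$ with $J:=\lfloor\beta\ln\ln m\rfloor+1$. Setting $T=2^J$, the set $S$ of admissible indices lies in an arithmetic progression of common difference $T$ inside $\{1,\dots,m\}$, so $|S|\le m/T+1$. The point is that $T\ge 2^{\beta\ln\ln m}=(\ln m)^{\beta\ln 2}\to\infty$, and this sparseness is ultimately what will furnish the $o(m^2\ln\ln m)$ saving.

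I would bound the two factors in each summand separately. The trivial bound $s+1\le m+1$ handles the first. For the $\Delta$-factor I would invoke the full additivity of $\Delta$ (already at work in the decomposition $A(k)=B(k)+C(k)$): writing $k-s=jT$ with $j$ a positive integer, one has $\Delta(k-s)=\Delta(j)+J$, and as $s$ ranges over $S$ the corresponding values of $j$ form a block of at most $m/T+1$ consecutive positive integers, each bounded by $\mu m/T$.

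Combining, one obtains
$$\sum_{s\in S}\Delta(k-s)\le J(m/T+1)+\sum_{j\le\mu m/T}\Delta(j).$$
The first term is $O((m/T)\ln\ln m)$ since $J=O(\ln\ln m)$. For the second, additivity of $\Delta$ gives $\sum_{j\le N}\Delta(j)=\Delta(N!)$, so the bound $\Delta(N!)=O(N\ln\ln N)$ established earlier in the section yields $O((m/T)\ln\ln m)$ once again. Therefore
$$I(m,\beta)\le(m+1)\sum_{s\in S}\Delta(k-s)=O\!\left(\frac{m^2\ln\ln m}{T}\right)=o(m^2\ln\ln m),$$
as claimed, since $T\to\infty$ with $m$.

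The only delicate point I anticipate is the bookkeeping in the last paragraph: one must check that the affine offset of the arithmetic progression $\{k-s:s\in S\}$ does not inflate the range of $j$ beyond $\mu m/T$, and that neither the $+1$ in $|S|\le m/T+1$ nor the additive constant $J=O(\ln\ln m)$ introduces a term exceeding the principal $O((m/T)\ln\ln m)$. Conceptually, however, the whole argument reduces to combining two ingredients already in the paper, namely the full additivity of $\Delta$ and the estimate $\Delta(N!)=O(N\ln\ln N)$, with the sparsity of indices $s$ carrying a large power of $2$ in $k-s$.
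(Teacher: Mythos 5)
Your proof is correct, and it takes a genuinely simpler route than the paper's. The paper factors out the \emph{exact} power of two, writing $k-s_j=2^{l_j}a_j$ with $a_j$ odd, counts the admissible indices via the bound $a_j<L(m)=\mu m/(\ln m)^{\beta\ln 2}$, and then splits the sum into $I_1=\sum(s_j+1)l_j$ and $I_2=\sum(s_j+1)\Delta(a_j)$; the term $I_1$ requires a further stratification over the possible values $t=l_j$, an arithmetic-progression count for each $t$, and the convergence of $\sum_t t\,2^{-t-1}$, while $I_2$ is handled by the $\Delta(N!)=O(N\ln\ln N)$ estimate. You instead factor out only the \emph{guaranteed} power $T=2^J$ with $J=\lfloor\beta\ln\ln m\rfloor+1$, so that $\Delta(k-s)=\Delta(j)+J$ with the $j$'s running over distinct consecutive integers up to $\mu m/T$; the leftover powers of two are simply absorbed into $\Delta(j)$ and controlled by the same $\Delta(N!)$ bound, which collapses the paper's two-part analysis (and in particular the whole $I_1$ computation) into one application of complete additivity. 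Your final saving $O\left(m^2\ln\ln m/T\right)$ with $T\ge(\ln m)^{\beta\ln 2}$ matches the quality of the paper's dominant term $I_2$. The bookkeeping points you flag at the end are indeed the only things to verify, and they all go through: $T\le 2(\ln m)^{\beta\ln 2}=o(m)$ so $m/T+1=O(m/T)$ and $J(m/T+1)=O((m/T)\ln\ln m)$, and $\ln\ln(\mu m/T)=O(\ln\ln m)$, so no term exceeds the principal one.
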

{\begin{proof} Suppose $k-s=2^la$,\ where $a$ is odd and $l>\beta\ln(\ln(m))$.\newline So $$a<\frac{\mu m}{2^{\beta\ln(ln(m))}}$$.\\
Define $$L(m)=\frac{\mu m}{2^{\beta\ln(ln(m))}}=\frac{\mu m}{{\ln(m)}^{\beta\ln(2)}}.$$\\
Since $$(k-s)\in\{(\mu-1)m,(\mu-1)m+1,\ldots,\mu m-1\},$$ one has that  for fixed $a$ there is at most one value of $l$ such that
$$2^la\in\{(\mu-1)m,(\mu-1)m+1,\ldots,\mu m-1\},$$hence there are at most $$L^{*}(m)\leq L(m)$$ summands with $$\nu_{2}(k-s)>\beta \ln(\ln(m)).$$
Let us number them ,say $$s_{1},s_{2},\ldots,s_{L^{*}(m)}$$ and write $$k-s_{j}=2^{l_{j}}a_{j}$$, where $a_{j}$ is odd and $$l_{j}>\beta\ln(\ln(m))$$ is integer for every $j\in\{1,2,\ldots,L^{*}(m)\}.$\\ Observe that $$\Delta(k-s)=l_{j}+\Delta(a_{j}).$$Hence if we define $$I\equiv\sum_{j=1}^{L^{*}(m)}(s_{j}+1)\Delta(k-s_{j})$$ we will get that $$I=\sum_{j=1}^{L^{*}(m)}(s_{j}+1)l_{j}+\sum_{j=1}^{L^{*}(m)}(s_{j}+1)\Delta(a_{j}).$$
\begin{definition}
Let us define $$I_1=\sum_{j=1}^{L^{*}(m)}(s_{j}+1)l_{j}$$ and $$I_2=\sum_{j=1}^{L^{*}(m)}(s_{j}+1)\Delta(a_{j}).$$
\end{definition}
Let us estimate $I_1$. \begin{definition} Define $$T_{m}=\{[\beta\ln(\ln(m))]+1,[\beta\ln(\ln(m))]+2,\ldots,[\log_{2}(\mu m)]\} $$ for $m$ large enough.\end{definition} It is clear that $$l_{j}\in T_{m}$$ for every $$j\in\{1,2,\ldots,L^{*}(m)\}.$$

Let us fix $$t\in T_{m}$$ and consider those $s$ for which $$k-s=2^ta$$, where $a$ is odd.\newline Since $a$ takes values from a progression with difference $d=2$, it follows that $2^ta$ takes values from a progression with difference $d=2^{t+1}$.Consequently the corresponding sum $$S(m,d)=\sum_{\{\nu_{2}(k-s)=t\}}(k-s)\leq\sum_{l=0}^{\epsilon+1}((\mu-1)m+ld)$$where $$\epsilon d\leq m<(\epsilon+1)d.$$
 Hence $$S(m,d)\leq(\epsilon+1)(\mu-1)m+\frac{\epsilon(\epsilon+1)}{2}d+\{(\mu-1)m+(\epsilon+1)d\}\leq$$$$\leq(\mu-1)\frac{m^2}{d}+(\mu-1)m+\frac{m}{2}(\epsilon+1)\leq$$$$\leq(\mu-1)\frac{m^2}{d}+(\mu-1)m+\frac{m}{2}(\frac{m}{d}+1)+\{(\mu-1)m+(\epsilon+1)d\}\leq$$$$\leq(\mu-1)\frac{m^2}{d}+(\mu-1)m+\frac{m}{2}(\frac{m}{d}+1)+\{(\mu-1)m+(\frac{m}{d}+1)d\}=$$$$=(\mu-\frac{1}{2})\frac{m^2}{d}+(2\mu-\frac{1}{2})m+d\leq(\mu-\frac{1}{2})\frac{m^2}{d}+(2\mu+\frac{1}{2})m.$$ So $$\sum_{l_{j}=t}s_{j}l_{j}\leq t((\mu-\frac{1}{2})\frac{m^2}{2^{t+1}}+(2\mu+\frac{1}{2})m)$$ for all $t\in T_{m}$.\\
Thereby $$I_1=\sum_{j=1}^{L^{*}(m)}s_{j}l_{j}+\sum_{j=1}^{L^{*}(m)}l_{j}=\sum_{t\in T_{m}}\sum_{l_{j}=t}s_{j}l_{j}+\sum_{j=1}^{L^{*}(m)}l_{j}\leq$$$$\leq\sum_{t\in T_{m}}t((\mu-\frac{1}{2})\frac{m^2}{2^{t+1}}+(2\mu+\frac{1}{2})m)+L(m)\log_{2}(m)=$$$$=(\mu-\frac{1}{2})m^2\sum_{t\in T_{m}}\frac{t}{2^{t+1}}+(2\mu+\frac{1}{2})m\sum_{t\in T_{m}}t+\frac{\mu m}{{\ln(m)}^{\beta \ln(2)}}\cdot\log_{2}(m)\leq$$$$\leq(\mu-\frac{1}{2})m^2\theta_{m}+(2\mu+\frac{1}{2})m({\log_{2}(m)}^2)+\frac{\mu m}{{\ln(m)}^{\beta \ln(2)}}\cdot\log_2{m}=$$$$=o(m^2)=o(m^2\ln(\ln(m)))$$ since $\theta_{m}=\sum_{t\in T_{m}}\frac{t}{2^{t+1}}\rightarrow 0.$\\\\\ 

Let us estimate $I_2$. \newline \newline One has that 
$$I_2=\sum_{j=1}^{L^{*}(m)}(s_{j}+1)\Delta(a_{j})\leq\sum_{j=1}^{L^{*}(m)}(m+1)\Delta(a_{j})\leq(m+1)\sum_{j=1}^{[L(m)]}\Delta(j)\leq$$$$\leq\gamma(m+1)L(m)\ln(\ln(L(m)))$$\newline by Lemma 1 and the fact that $a_{j}\leq [L(m)]$ for all $$j\in\{1,2,\ldots,L^{*}(m)\}.$$ \\ According to equality $L(m)=\frac{2m}{{\ln(m)}^{\beta \ln(2)}}$ we will get  $$I_2\leq\gamma(m+1)\frac{2m}{{\ln(m)}^{\beta \ln(2)}}\ln(\ln(\frac{2m}{\ln(m)}))\leq$$$$\leq C\frac{m^2\ln(\ln(m))}{{\ln(m)}^{\beta \ln(2)}}=o(m^2\ln(\ln(m)))$$
In the long run $I=I_1+I_2=o(m^2\ln(\ln(m)))$ as $m\rightarrow\infty$. The lemma is proved.

\end{proof}

\begin{lemma}There are  $c>0$ and $\beta>0$ such that for all sufficiently large $m\in\mathbb{N}$ it is always possible to select an  $$s_0\in{\{1,2,\ldots,m\}}$$ such that $$\nu_2(k-s_0)\leq\beta\ln(\ln(m))$$ and $$(s_0+1)\Delta(k-s_0)\geq c m\ln(\ln(m))$$ . \end{lemma}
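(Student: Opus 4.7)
The plan is a direct averaging argument that combines the lower bound on $A(k)$ from Lemma 3 with the upper bound on $I(m,\beta)$ from the preceding lemma. The choice of $\beta$ is essentially free: any fixed $\beta>0$ will do, so I would simply fix, say, $\beta=1$ once and for all at the start.

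First, I would split the index set $\{1,2,\ldots,m\}$ into the ``good'' indices
$$G_m=\{1\leq s\leq m:\nu_2(k-s)\leq \beta\ln(\ln(m))\}$$
and the complementary ``bad'' set $B_m$. By the definition of $I(m,\beta)$, the contribution of bad indices to $A(k)$ is exactly $I(m,\beta)$, and by the previous lemma this is $o(m^2\ln(\ln(m)))$ as $m\to\infty$.

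Second, by Lemma 3 there is a constant $c_0>0$ such that $A(k)\geq c_0 m^2\ln(\ln(m))$ for $m$ large. Subtracting the bad contribution gives
$$\sum_{s\in G_m}(s+1)\Delta(k-s)\;=\;A(k)-I(m,\beta)\;\geq\; c_0 m^2\ln(\ln(m))-o(m^2\ln(\ln(m)))\;\geq\;\tfrac{c_0}{2}\,m^2\ln(\ln(m))$$
for all sufficiently large $m$.

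Third, since $|G_m|\leq m$, an averaging (pigeonhole) step produces some $s_0\in G_m$ with
$$(s_0+1)\Delta(k-s_0)\;\geq\;\frac{1}{|G_m|}\sum_{s\in G_m}(s+1)\Delta(k-s)\;\geq\;\tfrac{c_0}{2}\,m\ln(\ln(m)).$$
Setting $c=c_0/2$ and remembering that $s_0\in G_m$ already guarantees $\nu_2(k-s_0)\leq \beta\ln(\ln(m))$, both requirements of the lemma are met. There is really no serious obstacle here: the whole content of the lemma has already been packaged into Lemma 3 and the previous lemma, and the only thing to verify is that the pigeonhole step tolerates the loss of one factor of $m$, which it does precisely because $|G_m|\leq m$.
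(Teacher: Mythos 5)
Your proposal is correct and is essentially identical to the paper's own argument: the paper likewise writes the good-index sum as $I^{*}(m,\beta)=A(k)-I(m,\beta)$, deduces $I^{*}\geq c\,m^{2}\ln(\ln(m))$ from the two preceding lemmas, and pigeonholes over the at most $m$ terms. Your added remark that any fixed $\beta>0$ works is accurate, since the $o(m^{2}\ln(\ln(m)))$ bound on $I(m,\beta)$ holds for every $\beta>0$.
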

\begin{proof}
\begin{definition}
Define $$I^{*}(m,\beta)=\sum_{\{1\leq s\leq m|\nu_{2}(k-s)\leq\beta\ln(\ln(m))\}}(s+1)\Delta(k-s).$$\end{definition}
So $$I^{*}(m,\beta)=A(k)-I(m,\beta)$$
According to lemmas 3 and  4  there exist  $c>0$ and $\beta>0$ such that the inequality $$I^{*}\geq cm^2\ln(\ln(m))$$ holds for sufficiently large $m$. So there is always $\ s_{0}\in\{1,2,\ldots,m\}$
such that $$\nu_{2}(k-s_{0})\leq\beta\ln(\ln(m))$$ and $$(s_{0}+1)\Delta(k-s_{0})\geq cm\ln(\ln(m))$$ for large $m$. The lemma is proved.
\end{proof}

\section{Proof of the theorem 1}

In  accordance with  lemma 4 $$\Delta(k-s_{0})\geq\frac{cm\ln(\ln(m))}{s_0+1}\geq\frac{cm\ln(\ln(m))}{m+1}\geq\frac{c}{2}\ln(\ln(m))$$ for sufficiently large $m$.

\bigskip
If $(k-s_{0})$ is odd then we are done, since  $$\tau(p^{k-s_0}+1)\geq\tau(k-s_0)\geq\Delta(k-s_0) .$$
Let $k-s_{0}=2^ta$, where $a$ is odd, $t\geq 1$. \newline \newline Then $a<\frac{k}{2}=\frac{\mu m}{2}.$ Let $\beta=\frac{c}{4}>0.$\newline

 Now we have that $$\Delta(a)= \Delta(k-s_0)-t\geq\frac{c}{2 }\ln(\ln(m))-\beta \ln(\ln(m))=\frac{c}{4}\ln(\ln(m))$$
when $m$ is large enough.\\ \medskip

\textbf{Consider 3 cases}:
\newline \newline
\begin{case} $a$ has a prime factor $q>2\mu$.\end{case}
\begin{proof}In this case $a=q\cdot b$, where $b$ is odd thus $b=\frac{a}{q}<\frac{\mu m}{2q}<\frac{m}{4}$. Hence $\exists$ an odd $r$ such that
$rb \in \{(\mu-1) m,(\mu-1) m+1,\dots,\mu m -1\}.$
 Let us take the smallest such $
 r $ and let $s^*$ satisfies $$ \mu m - s^* = k - s^* = rb$$, consequently $$ s^* = \mu m -rb \geq \mu m - ((\mu-1)m+2b)=m-2b>\frac{m}{2}.$$

 By the way $$\Delta (b) = \Delta(a) -1>\frac{c}{8}\ln(\ln(m)) $$ ($m$ is sufficiently large).
 Hence $$(s^*+1)\Delta(k-s^*)=(s^*+1)\Delta(rb)>s^*\Delta(b)>$$$$>\frac{m}{2}\cdot\frac{c}{4}\ln(\ln(m))=\frac{c}{8}m\ln(\ln(m)).$$
 We have that  $$k-s^*=rb$$ is odd and  so  we are done in this case.
\end{proof}
 \begin{case} All prime factors of $a$ are less then $2\mu$.\end{case} 
 \begin{proof} To  see  what   is  going  on  in  this  case   let  us  write   the   canonical  factorisation  of  $a$.
 Suppose  $a=p_1^{\alpha_1}p_2^{\alpha_2}\cdot\ldots\cdot p_t^{\alpha_t}$ where $p_j$ is an odd prime less than $2\mu$ and $\alpha_j$ is a positive integer for every $j\in \{1,2,\dots,t\}$.
  Assume that $\alpha_1\leq\alpha_2\leq\ldots\leq \alpha_t .$Thereby $$ \alpha_t\geq\frac{\Delta(a)}{t}\geq\frac{\Delta(a)}{\pi(2\mu)}\geq\frac{c}{4\pi(2\mu)}\ln(\ln(m)).$$
  Let  us  take  $$b=p_t^{\beta_t}$$   and  impose the  following  conditions on it .\newline
	$1) b<\frac{m}{4}$ \\ $2)\beta_{t}\geq\rho\ln(\ln(m))$ for some  $\rho>0$.\\ \newline
  To  satisfy  the  first   condition  it  is  enough  to   find $\beta_t$ from $$(2\mu)^{\beta_t}<\frac{m}{4},$$
  or $$\beta_t< (log_{2\mu}(\frac{m}{4})),$$ so  it  is  enough   to  take $$\beta_{t} = [\frac{c}{4\pi(2\mu)}\ln(\ln(m))]$$ for large $m$, to   satisfy both conditions.
  To  finish   the  proof   we  need  only  to  repeat   the  last   part  of    solution  of   case 1.
  \end{proof}
	
  \begin{case} $a = 1$.\end{case}
  \begin{proof} In this case  $$ k - s_0 = 2^t a = 2^t $$ so $$ t \geq log_2((\mu - 1)m) > \beta \ln(\ln(m)) $$ for large $m$, 
  which is  a contradiction. Case 3 is proved.\end{proof}

	Notice that we have proved the theorem for $$k = \mu m$$ only. So we need to prove it in any case.
\begin{proof}
  Assume that $$ k = \mu m - r$$ for sufficiently large $m$ and $$r \in \{1,2,\dots, \mu - 1\}.$$
  Therefore $$ \tau (p^k + p^{s-r}) = \tau (p^{\mu m - r} + p^{s-r})= \frac{s - r + 1}{s + 1} \tau (p^{\mu m} + p^s).$$
   It is evident that we may assume $$s>2\mu.$$ \newline Indeed, in the former summation  first $2\mu$ summands do not influence upon the  sum, since their sum is $$o(m^2 \ln(\ln(m))).$$
	
  So, if we take an $s_0$ which maximizes $$ \tau(p^{\mu m} + p^s) $$ we will get that $$ \tau (p^{\mu m - r} + p^{s_0 - r}) \geq \frac{1}{2}\tau (p^{\mu m} + p^{s_0}),$$ since $$ \frac{s-r+1}{s+1} \geq \frac{1}{2}$$ for $ s > 2\mu $ and $r<\mu.$\newline  
  Observing that $$ \mu (s_0 - r) \leq \mu m - \mu r < \mu m - r $$ we are done . 
	The theorem is now proved.\end{proof}
	\section{Proof of the theorem 2}
	\begin{lemma} If $1\leq \mu < {\theta}^{-1}$, then there is a constant $c>0$ such that $$T_{N}(\mu)>c\frac{ln(N)}{\tau(N)}$$ for all positive integers $N$.\end{lemma}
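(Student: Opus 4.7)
The plan is to estimate the block sum $\sum_{m=1}^{M}\tau(N+m)$ with $M=\lfloor N^{1/\mu}\rfloor$ by differencing the asymptotic for $S(x):=\sum_{k\le x}\tau(k)$ provided by the definition of $\theta$, and then to extract the claim by bounding the maximum below by the mean (the elementary inequality $\max\ge \mathrm{average}$). The hypothesis $\mu<\theta^{-1}$ will be used precisely to guarantee that the main term dominates the error coming from the partial-sum asymptotic.

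First I would write
\[
\sum_{m=1}^{M}\tau(N+m)=S(N+M)-S(N),
\]
and invoke $S(x)=x\ln x+(2\gamma-1)x+O(x^{\lambda+\epsilon})$ at both endpoints, where $\lambda,\epsilon$ are to be chosen. Since $\mu<\theta^{-1}$, I can pick $\lambda\in(\theta,1/\mu)$ and then $\epsilon>0$ with $\lambda+\epsilon<1/\mu$. Subtracting yields
\[
S(N+M)-S(N)=M\ln(N+M)+N\ln\!\left(1+\tfrac{M}{N}\right)+(2\gamma-1)M+O(N^{\lambda+\epsilon}).
\]
Because $M\le N$ whenever $\mu\ge 1$, both $N\ln(1+M/N)=O(M)$ and $M\ln(N+M)=M\ln N+O(M)$, so the right side collapses to $M\ln N+O(M)+O(N^{\lambda+\epsilon})$. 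The essential cancellation is that the condition $\lambda+\epsilon<1/\mu$ forces $N^{\lambda+\epsilon}=o(N^{1/\mu})=o(M)$, so the entire error is $o(M\ln N)$.

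Consequently there exists a constant $c_1>0$ such that $\sum_{m=1}^{M}\tau(N+m)\ge c_1 M\ln N$ for all $N$ beyond some threshold. The $M$ summands cannot all lie below their mean, hence $\max_{1\le m\le M}\tau(N+m)\ge c_1\ln N$, and dividing by $\tau(N)$ gives $T_N(\mu)\ge c_1(\ln N)/\tau(N)$. The inequality extends to every $N\in\mathbb{N}$ by shrinking $c$ to absorb the finitely many small exceptional values, for which the estimate is trivial. The one delicate point is the parameter juggling: one needs simultaneously $\lambda>\theta$ (to invoke the $\theta$-asymptotic) and $\lambda+\epsilon<1/\mu$ (to beat the error with the main term $M\ln N$), and the strict inequality $\mu<\theta^{-1}$ is exactly what makes this window of admissible exponents nonempty.
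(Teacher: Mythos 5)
Your proposal is correct and follows essentially the same route as the paper: difference the partial-sum asymptotic for $\sum_{k\le x}\tau(k)$ over the block $(N, N+\lfloor N^{1/\mu}\rfloor]$, use $1/\mu>\theta$ to make the error term negligible against the main term $N^{1/\mu}\ln N$, and then bound the maximum below by the average. Your explicit choice of $\lambda\in(\theta,1/\mu)$ and the remark about absorbing finitely many small $N$ into the constant are slightly more careful than the paper's write-up, but the argument is the same.
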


\begin{proof} Using the formula  $$\sum_{k=1}^N \tau(k)=N\ln(N)+(2\gamma-1)N+O(N^{\theta+\epsilon})$$ we will get that  $$\sum_{k=N+1}^{N+[\sqrt[\mu]{N}]} \tau(k)=(N+\sqrt[\mu]{N})\ln(N+\sqrt[\mu]{N})+(2\gamma-1)(N+\sqrt[\mu]{N})+$$$$+O({(N+\sqrt[\mu]{N})}^{\theta+\epsilon})-(N\ln(N)+(2\gamma-1)N+O(N^{\theta+\epsilon})) =$$$$=N^{\frac{1}{\mu}}\ln(N)+O(N^{\frac{1}{\mu}})+O(N^{\theta+\epsilon})=N^{\frac{1}{\mu}}\ln(N)+O(N^{\frac{1}{\mu}})$$, since $\frac{1}{\mu}>\theta$ and $\epsilon>0$ is arbitrary.
	Thus $$\max_{1\leq m \leq  [\sqrt[\mu]{N}]}\{\tau (N + m)\}>\frac{\sum_{k=N+1}^{N+[\sqrt[\mu]{N}]} \tau(k)}{\sqrt[\mu]{N}}=$$$$=\frac{N^{\frac{1}{\mu}}\ln(N)+O(N^{\frac{1}{\mu}})}{\sqrt[\mu]{N}}>c\ln(N),$$ for some $c>0$ and any positive integer $N$.\newline  It follows that $$T_{N}(\mu)=\frac{\max_{1\leq m \leq [\sqrt[\mu]{N}]}\{\tau (N + m)\}}{\tau(N)}>c\frac{\ln(N)}{\tau(N)}$$ for all positive integers $N$.
	 The lemma is proved.\end{proof}

Now  we prove theorem $2$.  
\begin{proof}Suppose $(n_{k})$ is a  sequence of  positive  integers  such  that $n_k={p_k}^{j_k}$, where  $p_k$  is  prime  , $j_k$ is  a positive integer for   each $k\in \mathbb{N}$ and $n_k\rightarrow\infty$ as $k\rightarrow\infty$. \newline \newline
Suppose $E>0$ is an arbitrary number. According to theorem $1$ there  is an $A>0$ such that $j_k>A$ implies $T_{n_k}(\mu)>E.$ \newline \newline 
Lemma $6$ shows  that $$ T_{n_k}(\mu)>c\frac{\ln(n_k)}{\tau(n_k)}=c\frac{j_k\ln(p_k)}{j_k+1} \geq \frac{1}{2}c\ln(p_k)$$ So there is $B>0$ such that $p_k>B$ implies $T_{n_k}(\mu)>E$. \newline \newline 
The condition $n_k\rightarrow\infty$ shows  that there are only finitely many $k\in N$ with $j_k\leq A$ and $p_k\leq B$. So there is a positive integer $k=k(E)$ such that $T_{n_k}(\mu)>E$ for any positive integer $k>k(E)$. \newline 
Since $E $ was  arbitrary, we conclude that $T_{n_k}(\mu)\rightarrow\infty$ as $k\rightarrow\infty$. \newline 
The  theorem is proved. 
\end{proof}

\bigskip
\begin{remark}
In the end worth mentioning that for every $\mu>0$ the relation  $$\lim_{n\to\infty}T_n(\mu)=\infty$$ seems to be plausible.
\end{remark}

	\newpage

\end{document}